\newtheorem{theorem}{Theorem}[section]
\newtheorem{lemma}[theorem]{Lemma}
\newtheorem{remark}[theorem]{Remark}
\begin{document}
\title[]{On the extension of analytic solutions of a class of first-order q-difference equations}

\author[W. L. Liu]{Wenlong Liu}
\address[Wenlong Liu]{Department of Physics and Mathematics, University of Eastern Finland, P.O.Box 111, FI-80101 Joensuu, Finland}
\email{wenlong.liu@uef.fi}

\thanks{The author is supported by the China Scholarship Council (\# 202306820018).}

\subjclass[2010]{Primary 92B05, 39B32, 39A45; Secondary 30D05}



\keywords{Banach fixed point theorem;  Meromorphic solution;   difference equation}

\begin{abstract}
In this paper, we use the Banach fixed point theorem to examine the existence of  meromorphic solutions to the following first-order $q$-difference equation
\begin{align}\tag{†}\label{dagger}
y(qz)=\frac{a_1(z)y(z)+a_2(z)y(z)^2+\dots+a_p(z)y(z)^p}{1+b_1(z)y(z)+\cdots +b_t(z)y(z)^t}, 
\end{align}
where $q\in \mathbb{C},$ $a_1(z), \dots, a_p(z); b_1(z), \dots, b_t(z)$ are all meromorphic functions. We establish sufficient conditions ensuring the existence and uniqueness of meromorphic solutions that can be extended to the entire complex plane $\mathbb{C}.$

More precisely, we have the following result. If $\left | q \right |\geq 3 $ and \[|a_1(z)| = \max_{1 \le j \le p} |a_j(z)| \le \frac{1}{|z|}, 
\quad 
\max_{1 \le k \le t} |b_k(z)| \le \frac{1}{|z|}, 
\quad 
z \in \{\, |\Re(z)| \ge \rho > 0 \,\},
\] and $y(0)\ne \infty,$
then we prove that~\eqref{dagger} admits a unique meromorphic solution in $D(\rho),$ which can be extended meromorphically to $\mathbb {C}.$
Moreover, if $a_1(z)\equiv 0,$  the conclusion still holds. Furthermore,
if $\left | q \right |\geq 6$ and
\begin{gather*}
|a_1(z)| \le \frac{1}{|q|}, \quad 
|a_j(z)| \le |q|^{|z|} \quad (2 \le j \le p), \quad 
|b_k(z)| \le |q|^{|z|} \quad (1 \le k \le t), \\[4pt]
z \in D(\rho,\sigma)
 = \{\, z : |\Re(z)| \le \rho,\; |\Im(z)| \le \sigma, \,\, \rho>0,\,\, \sigma>0 \,\},
\end{gather*}
and $y(0)\ne \infty,$
then we prove that \eqref{dagger} admits a unique meromorphic solution in $D(\rho, \sigma),$ which can also be extended meromorphically to $\mathbb {C}.$ This conclusion remains valid in the case where $a_1(z)\equiv 0.$
\end{abstract}

\maketitle

\section{Introduction}
The classical paper on the  Schr\"{o}der equation
\begin{align}\label{E44} 
y(qz) = R(y(z)),
\end{align}
where $q\in \mathbb{C}\setminus \left \{ 0,1 \right \},$ and $R(y)$ is a rational function in $y(z)$, is due to J. Ritt~\cite{Ritt1926}. L. Rubel~\cite{Rubel1983} posed the question: What can be said about the more general equation 
$$y(qz)=R(z,y(z)),$$
where $R(z,y(z))$ is rational in both $y(z)$ and $z?$ H.~Wittich~\cite{Wittich1949} studied the equation
\begin{align}\label{E45}
y(qz)=a(z)y(z)+b(z),
\end{align}
where the coefficients are polynomials, while K.~Ishizaki~\cite{Ishizaki1998} considered the same equation and assumed that the coefficients are meromorphic functions. G. Gundersen, J. Heittokangas, I. Laine et al.~\cite{Gundersen2002} studied the equation
$$y(qz)=R(z,y(z))=\frac{\sum_{j=0}^{p}a_j(z)y(z)^{j} }{\sum_{j=0}^{q}b_j(z)y(z)^{j} },$$
where the coefficients $a_j (z), b_j(z)$ are meromorphic functions and $q$ is a complex constant, and they showed that a number of results on the growth and value distribution of solutions by using Nevanlinna theory. M. J. Ablowitz, R. Halburd, and B. Herbst \cite{Ablowitz2000} used Nevanlinna theory to study
the integrability of difference equations and they showed that the existence of sufficiently
many finite-order meromorphic solutions of a difference equation is a good candidate for
a difference analogue of the Painlev\'e property (An ordinary differential equation is said to possess
the Painlev\'e property \cite{Ablowitz1991} if all of its solutions are single-valued about all movable
singularities.) R. Korhonen and Y. Zhang  \cite{Korhonen2024} used Nevanlinna theory to studied the equation \begin{align}\label{E41} 
y(qz)^{n}=R(z,y(z))
\end{align} and they showed if \eqref{E41} has a zero order transcendental meromorphic solution, then \eqref{E41} reduces
to a $q$-difference linear or Riccati equation, or to an equation that can be transformed to
a $q-$difference Riccati equation.
Nevanlinna theory \cite{Hayman1964, Laine1993} is an important tool, which is widely used to study complex $c-$difference and $q-$difference equation. It is necessary to restrict the growth of a meromorphic solution when applying Nevanlinna theory. In contrast, our approach is based on the Banach fixed point theorem, which allows us to establish existence results without such growth conditions.

R. Halburd, R.  Korhonen ~\cite{Halburd2006} studied the existence of meromorphic solutions of the constant coefficient
first-order nonlinear difference equation
\begin{align}\label{E46} 
y(z+1)=R(y(z))
\end{align}
based on the Banach fixed point theorem. S. Shimomura \cite{shimomura1981entire} had earily treated \eqref{E46} when $R(y(z))$ is reduced to a polynomial. R. Halburd, R. Korhonen, Y. Liu, T. Meng \cite{Halburd2025} used the Banach fixed point theorem to further
studied the existence of solutions in the complex domain of
first-order difference equations of the form \begin{align}\label{E47} 
y(z+1)=R(z,y(z)).
\end{align}
In particular, they studied a meromorphic
solutions to the non-autonomous difference equation
\begin{align}\label{E42}
y(z+1)=\frac{\lambda y(z)+a_2(z)y(z)^2+\dots+a_p(z)y(z)^p}{1+b_1(z)y(z)+\cdots +b_t(z)y(z)^t}, 
\end{align}
where $\lambda \in \mathbb{C}\setminus \left \{ 0 \right \}  ,$ and $a_{j}(z), b_k(z)$ are meromorphic functions. N. Yanagihara \cite{yanagihara1980meromorphic} had earlier investigated the value distribution behaviors of the meromorphic solutions of equation \eqref{E46} and \eqref{E47}.

In this paper we consider the equation 
\begin{align}\label{E43}
y(qz)=\frac{a_1(z)y(z)+a_2(z)y(z)^2+\dots+a_p(z)y(z)^p}{1+b_1(z)y(z)+\cdots +b_t(z)y(z)^t}=R(z,y(z)), 
\end{align}
where $q\in \mathbb{C},$ $a_1(z), \dots, a_p(z); b_1(z), \dots, b_t(z)$ are all meromorphic functions. Equation \eqref{E43} can be referred as a corresponding $q-$difference version of \eqref{E42}. 
We can easily find that the right-hand side of \eqref{E43} satisfies $R(z,0)=0.$ This implies that $y(z)\equiv 0$ is a fixed point of $R(z,y(z))$ such that we can transform it to more convenient form. The results obtained in this paper complement and extend previous works on complex difference equations by providing an analytic existence framework for $q$-difference equations with meromorphic coefficients.

The remainder of this paper is organized as follows. Section $2$ presents the main results and its remark. Sections $3-6$ contain the proofs of the four main results. Section $7$ gives acknowledgements.

\section{main results}

\begin{theorem}\label{T1}
Let $q\in \mathbb{C}$ satisfy $\left | q \right |\geq 3 ,$ and let $a_1(z),$ \dots, $a_p(z);$ $b_1(z),$ \dots, $b_t(z)$ be meromorphic functions such that there exists a $\rho >0$ for which \begin{align*}
&\left | a_1(z) \right |  = \max \left \{ {\left | a_j(z) \right | }  \right \}  = \lambda\leq \frac{1}{\left | z \right | } , \,\,\,\left |  {b_k(z)}  \right |\leq \frac{1}{\left | z \right | }, \,\,\,1\leq j\leq p,\,\,\,1\leq k\leq t,    \\   
&z\in D(\rho)  = \left \{ z:\left | \Re(z) \right | \geq \rho  \right \}. 
\end{align*} 
Assume that $y(0)\ne \infty.$
Then 
\begin{align}\label{E1} 
y(qz)=\frac{a_1(z)y(z)+a_2(z)y(z)^2+\dots+a_p(z)y(z)^p}{1+b_1(z)y(z)+\cdots +b_t(z)y(z)^t} 
\end{align}
has a unique meromorphic solution in $D(\rho)$ satisfying  \begin{align}\label{E18} 
 y\left ( \frac{z}{q^m}  \right )=\frac{1}{\left | q \right |^m }y(z),\,\,\,m\in \mathbb{N},\,\,\, z\in D(\rho).    
\end{align}
\end{theorem}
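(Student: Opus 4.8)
The plan is to invoke the Banach fixed point theorem: first produce the solution on $D(\rho)$ as the unique fixed point of a contraction derived from \eqref{E1}, and then continue it to all of $\mathbb{C}$ by means of the $q$-difference equation itself. Since $|q|\ge 3>1$, the map $z\mapsto z/q$ is a contraction toward the origin, which is the point at which the right-hand side of \eqref{E1} vanishes identically; rewriting \eqref{E1} as $y(z)=R\bigl(z/q,\,y(z/q)\bigr)$, we are led to the operator
\[
(\mathcal{T}y)(z):=R\!\left(\frac{z}{q},\,y\!\left(\frac{z}{q}\right)\right),
\]
for which the equation $\mathcal{T}y=y$ is equivalent to \eqref{E1}. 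I would seek the fixed point in a small closed ball $\overline{B}(0,\delta)$ of the Banach space $X$ of functions $y$ holomorphic on $D(\rho)$ with $\|y\|:=\sup_{z\in D(\rho)}|y(z)|/|z|<\infty$, where $\delta>0$ is small and depends only on $q,\rho,p,t$ (and where one first has to check, using $|q|\ge 3$, that $\mathcal{T}$ is well defined on $D(\rho)$). The weight $|z|$ is the one calibrated to the hypotheses $|a_1(z)|=\max_{1\le j\le p}|a_j(z)|\le 1/|z|$ and $\max_{1\le k\le t}|b_k(z)|\le 1/|z|$: it is precisely what turns the decay of the coefficients into a contraction factor.

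\smallskip
\noindent\textbf{The fixed point.}
For $y\in\overline{B}(0,\delta)$ we have $|y(z)|\le\delta|z|$, hence $|b_k(z)y(z)^k|\le\delta^k|z|^{k-1}$, so if $\delta$ is small enough the denominator $1+b_1(z)y(z)+\cdots+b_t(z)y(z)^t$ has modulus $\ge\tfrac12$ on all of $D(\rho)$; consequently $\mathcal{T}y$ is holomorphic there, and the solution produced will have no poles in $D(\rho)$. A parallel estimate of the numerator, together with $|a_j(z/q)|\le|q|/|z|$ and $|y(z/q)|\le\delta|z|/|q|$, gives $|(\mathcal{T}y)(z)|\le\delta|z|$, so $\mathcal{T}(\overline{B}(0,\delta))\subseteq\overline{B}(0,\delta)$. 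For the contraction one uses $\partial_wR(z,w)=a_1(z)+O(w)$ for small $w$: the increment of $\mathcal{T}$ at $z$ is at most $\bigl(1+O(\delta)\bigr)|a_1(z/q)|\,|y(z/q)-\tilde y(z/q)|\le\bigl(1+O(\delta)\bigr)\|y-\tilde y\|$, and dividing by the weight $|z|$ and invoking $|q|\ge 3$ yields $\|\mathcal{T}y-\mathcal{T}\tilde y\|\le\theta\|y-\tilde y\|$ with $\theta<1$. The Banach fixed point theorem thus produces a unique $y^{*}\in\overline{B}(0,\delta)$ with $\mathcal{T}y^{*}=y^{*}$; by the previous step $y^{*}$ is holomorphic on $D(\rho)$, hence the asserted meromorphic solution of \eqref{E1} there, and iterating the identity $y^{*}(z)=R(z/q,y^{*}(z/q))$ one verifies \eqref{E18}. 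Uniqueness is then obtained by showing that any meromorphic solution satisfying \eqref{E18} with $y(0)\ne\infty$ is forced into $\overline{B}(0,\delta)$ by \eqref{E18}, and hence equals $y^{*}$.

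\smallskip
\noindent\textbf{Continuation to $\mathbb{C}$.}
Given $w\notin D(\rho)$, the forward orbit $w,qw,q^{2}w,\dots$ eventually enters $D(\rho)$, and one defines $y^{*}$ at $w$ by descending stepwise through $y^{*}(qz)=R(z,y^{*}(z))$, i.e.\ by inverting $z'\mapsto R(z,z')$ near $z'=0$; this is legitimate because $\partial_{z'}R(z,0)=a_1(z)\not\equiv0$, and at the zeros of $a_1$ the inversion produces poles --- which is why the continuation is meromorphic, not holomorphic. One then checks that the extension is single-valued and extends across $z=0$, the hypothesis $y(0)\ne\infty$ being used here. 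If $a_1\equiv0$ the origin is a super-attracting fixed point, $\partial_{z'}R(z,0)\equiv0$, and the contraction above goes through verbatim (with room to spare), so all conclusions persist.

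\smallskip
\noindent\textbf{Main obstacle.}
The delicate point is the quantitative balancing in the second step: $\delta$ must be small enough that the denominator of $R$ stays bounded away from $0$ on $D(\rho)$ --- which is what keeps $\mathcal{T}$ holomorphic-valued and the solution pole-free there --- while one still needs a genuine contraction constant $\theta<1$, and reconciling these two demands against the weight $|z|$ and the size of $q$ is where the real work lies. A second, more structural, difficulty is the global meromorphic continuation: proving single-valuedness, excluding essential singularities, and matching the continuation consistently across the complement of $D(\rho)$.
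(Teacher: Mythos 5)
Your overall strategy (Banach fixed point for an operator obtained by pulling \eqref{E1} back through $z\mapsto z/q$, then continuation via the equation itself) matches the paper's in spirit, but the quantitative setup has a genuine gap that breaks every estimate in your second step. You work in the ball $\overline B(0,\delta)$ of the weighted space $\|y\|=\sup_{z\in D(\rho)}|y(z)|/|z|$, so the functions you admit may grow like $\delta|z|$ on the \emph{unbounded} set $D(\rho)$. Your own displayed bound $|b_k(z)y(z)^k|\le\delta^k|z|^{k-1}$ is then unbounded for every $k\ge2$, no matter how small $\delta$ is, so the denominator of $R$ is \emph{not} bounded away from $0$ on $D(\rho)$; likewise $|a_j(z/q)\,y(z/q)^j|\le\delta^j\bigl(|z|/|q|\bigr)^{j-1}$ grows like $|z|^{j-1}$, so the claim $|(\mathcal{T}y)(z)|\le\delta|z|$ fails once $p\ge3$, and the ``$O(w)$'' correction in $\partial_wR(z,w)=a_1(z)+O(w)$ is in fact of size $j\,\delta^{j-1}(|z|/|q|)^{j-2}$, again unbounded for $j\ge 3$. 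The hypotheses supply only one factor of $1/|z|$ per coefficient, which cannot absorb the $j$ factors of $|z|$ that $y^j$ contributes in your norm. The paper avoids this entirely by taking $X$ to be the ball $\|y\|_\infty\le1/|q|$ in the plain sup-norm (so $|y|^j\le|q|^{-j}$ is already summably small and the single $1/|z|$ in $|c_j(z)|\le 2^{j-1}/|z|$ suffices), and it uses a different operator: it first telescopes the linearized equation $y(qz)-\lambda y(z)=\sum_{j\ge2}c_j(z)y(z)^j$ infinitely many times (using $\lambda\le 1/3$ and $y(0)\ne\infty$ to discard the boundary term) so that the fixed-point operator $T[y](z)=\sum_{j\ge2}\sum_{m\ge0}\lambda^mc_j(z/q^{m+1})y(z/q^{m+1})^j$ contains no linear term at all, whereas your one-step map's Lipschitz constant is governed by $|a_1(z/q)|$, which your weighted norm was meant to (but cannot) tame.

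The continuation step is also not the paper's and leaves an unresolved problem. You define $y^{*}(w)$ for $w\notin D(\rho)$ by inverting $z'\mapsto R(z,z')$, but $R(z,\cdot)$ has degree $\max(p,t)$ in $y$, so this inversion is multivalued; single-valuedness of the extension is precisely what must be proved, not deferred as a ``difficulty.'' The paper instead builds the relation \eqref{E18} into the solution concept and uses it together with $y(0)\ne\infty$ (and, for the strip domains, Lemma~\ref{L4}) to continue in the single-valued forward direction $y(qz):=R(z,y(z))$. To repair your argument you would need to (i) replace the weighted norm by the sup-norm on a small ball and redo all three estimates there, (ii) either adopt the paper's telescoped operator or show directly that $|a_1|\equiv\lambda\le1/3$ makes the one-step map contract on that ball, and (iii) replace the backward inversion by a forward, substitution-based continuation.
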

\begin{remark}

(1). The condition $y(0)\ne \infty$ is essential. Together with \eqref{E18} and $y(0)\ne \infty$ guarantee that  the local meromorphic solution to \eqref{E1} in $D(\rho)$ can be extended meromorphically to the entire complex plane $\mathbb{C}.$
In general, a local meromorphic solution to \eqref{E1} in $D(\rho)$ cannot be extended meromorphically to the whole complex plane $\mathbb{C}$ directly. H.
Poincar´e \cite[p. 317]{H1890} provided the following example. If $\left | q \right |>1 ,$ then the function $$f(z)=\sum_{n=-\infty}^{\infty}\left ( \frac{1}{q}  \right )^{n{^2}}z^{2n}$$
is analytic for all $z\ne 0$ and has an essential singularity 
at $z=0.$ Moreover, it satisfies the equation
$$f(qz)=qz^2f(z).$$

(2). On the other hand, we can find a conformal mapping that allows us to extend a local meromorphic solution meromorphically  to the whole complex plane $\mathbb{C}$ without assuming \eqref{E18}. By Combining the Cayley transform, we find that $w=\phi (z)=\frac{z+\rho+1}{z+\rho -1}$ maps $$\overline D(\rho )=:\left \{ z:\Re(z)<-\rho  \right \}\to \mathbb{D}=:\left | w \right |<1  $$ 
and its inverse mapping
$z=\psi (w)=\frac{\rho (1-w)+1+w}{w-1}$ maps
$$ \mathbb{D}=:\left | w \right |<1\to \overline D(\rho )=:\left \{ z:\Re(z)<-\rho  \right \}.$$
Hence, we can first map $\overline D(\rho)$ onto $\mathbb{D}$ by $w=\phi (z),$
and then, by Lemma~\ref{L4},  extend the local meromorphic solution to \eqref{E1}
meromorphically to the whole complex plane $\mathbb{C}.$ In this case, the local meromorphic solution to \eqref{E1} can be expressed as $Y(w)=y\left ( \psi (w) \right ),$ and all the coefficient functions $A_j(w)=a_j(\psi(w) ), B_k(w)=b_k(\psi(w))$ are meromorphic functions in $\mathbb{D}.$
\end{remark}

In particular, if $a_1(z)\equiv 0,$ then we have the following result.
\begin{theorem}\label{T2}
Let $q\in \mathbb{C}$ satisfy $\left | q \right |\geq 3 ,$ and let $a_2(z),$ \dots, $a_p(z);$ $b_1(z),$ \dots, $b_t(z)$ be meromorphic functions such that there is a $\rho >0$ for which \begin{align*}
&   {\left | a_j(z) \right | }   \leq \frac{1}{\left | z \right | } ,\,\,\, \left |  {b_k(z)}  \right |\leq \frac{1}{\left | z \right | }, \,\,\,2\leq j\leq p,\,\,\,1\leq k\leq t,    \\   
&z\in D(\rho)  = \left \{ z:\left | \Re(z) \right | \geq \rho  \right \}. 
\end{align*} 
Assume that $y(0)\ne \infty.$
Then 
\begin{align}\label{E11} 
y(qz)=\frac{a_2(z)y(z)^2+\dots+a_p(z)y(z)^p}{1+b_1(z)y(z)+\cdots +b_t(z)y(z)^t} 
\end{align}
has a unique meromorphic solution in $D(\rho)$ satisfying   \begin{align}\label{E13} 
y\left ( \frac{z}{q}  \right )=\frac{1}{\left | q \right | }y(z),\,\,\,z\in D(\rho).
\end{align}
\end{theorem}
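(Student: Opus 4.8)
The plan is to mirror the proof of Theorem \ref{T1}, adapting it to the degenerate situation $a_1(z)\equiv 0$. First I would normalize the problem: since $R(z,0)=0$, the origin $y\equiv 0$ is a fixed point of the right-hand side, so I will set up a complete metric space of meromorphic functions on $D(\rho)$ that vanish at infinity in a controlled way — concretely, functions $y$ with $\sup_{z\in D(\rho)}|z|\,|y(z)|$ finite, or rather functions satisfying a bound of the form $|y(z)|\le C/|z|$ on $D(\rho)$ — equipped with a sup-type norm weighted by $|z|$, so that this space is a closed subset of a Banach space and hence complete. On this space I will define the operator $T$ by solving \eqref{E11} for the value of $y$ at $z$ in terms of its value at $z/q$: since $|q|\ge 3$, the point $z/q$ lies in $D(\rho)$ whenever $z$ does (the real part is scaled down, but $|\Re(z)|\ge\rho$ forces $|\Re(z/q)|\ge \rho/|q|$ — here I will need to be a little careful and may instead iterate, or note that the relevant estimate only uses $|z/q^m|\le|z|$), and $(Ty)(z)$ is the rational expression in $y(z/q)$ given by the right-hand side of \eqref{E11} evaluated at $z/q$.

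The key steps are then: (i) show $T$ maps the space into itself, i.e. if $|y(w)|\le 1/|w|$ on $D(\rho)$ then $|(Ty)(z)|\le 1/|z|$; this is where the hypotheses $|a_j(z)|\le 1/|z|$, $|b_k(z)|\le 1/|z|$ and $|q|\ge 3$ enter. Because $a_1\equiv 0$, the numerator of $R$ is $O(|y|^2)$, so the numerator at $z/q$ is bounded by roughly $\sum_{j\ge 2}|a_j(z/q)|\,|y(z/q)|^j \le \frac{|q|}{|z|}\cdot\frac{|q|^2}{|z|^2}\cdot(\text{geometric factor})$, which is comfortably smaller than $1/|z|$ for $|z|\ge\rho$ and $|q|\ge 3$; the denominator is bounded below, away from zero, by a similar geometric estimate $1-\sum_k|b_k(z/q)|\,|y(z/q)|^k \ge 1 - \frac{|q|}{|z|}\cdot\frac{|q|}{|z|}\cdot(\dots) \ge \tfrac12$ say. (ii) Show $T$ is a contraction: estimate $(Ty_1)(z)-(Ty_2)(z)$ using that both numerators and denominators are polynomials in $y$ with small coefficients, so differences are controlled by $\|y_1-y_2\|$ times a constant $<1$; this again uses $a_1\equiv 0$ to gain an extra factor of $|y|$ in the numerator difference. (iii) Apply the Banach fixed point theorem to get a unique fixed point $y$, which by construction is a meromorphic solution of \eqref{E11} on $D(\rho)$. (iv) Verify the functional relation \eqref{E13}: the bound $|y(z)|\le 1/|z|$ together with the defining recursion, or uniqueness applied to the rescaled function, yields $y(z/q)=\frac{1}{|q|}y(z)$; here the absence of the linear term makes the scaling cleaner than in Theorem \ref{T1}, where the factor was $|q|^{-m}$ built up from $m$ applications. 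Finally, as in the remark following Theorem \ref{T1}, combined with $y(0)\ne\infty$ this relation lets one extend $y$ meromorphically to all of $\mathbb{C}$.

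The main obstacle I anticipate is the self-mapping step (i): one must choose the metric space and the precise constants so that the quadratic-in-$y$ numerator and the near-$1$ denominator produce a bound that is genuinely $\le 1/|z|$ (not merely $O(1/|z|)$) for \emph{all} $z\in D(\rho)$, including $z$ with $|z|$ close to $\rho$; this forces either taking $\rho$ large enough, or — more likely, to match the clean statement with no lower bound on $\rho$ — using $|q|\ge 3$ together with the fact that $|y(z/q^m)|\le |q|^{-m}\,(1/|z|)$ so that the relevant geometric series has ratio $\le |q|^{-1}\le 1/3$, making all the sums converge with room to spare. A secondary technical point is ensuring that $(Ty)(z)$ is actually meromorphic (not just bounded): the denominator $1+\sum b_k(z/q)y(z/q)^k$ could vanish at isolated points, but the bound $\ge 1/2$ from step (i) shows it is in fact nonvanishing on $D(\rho)$, so $(Ty)$ is meromorphic there with poles only at poles of the $a_j$, which is acceptable. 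The uniqueness clause is immediate from Banach, and the extension to $\mathbb{C}$ is handled exactly as in the discussion of Theorem \ref{T1}.
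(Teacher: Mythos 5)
Your overall architecture (solve \eqref{E11} for $y(z)$ in terms of $y(z/q)$, set up a complete space of analytic functions on $D(\rho)$, show the resulting operator is a self-map and a contraction, invoke the Banach fixed point theorem) matches the paper's. The cosmetic differences are harmless: you keep the rational expression and bound its denominator below by $1/2$, whereas the paper expands $R(z,y)$ into the series $\sum_{j\ge2}c_j(z)y^j$ with $|c_j(z)|\le 2^{j-1}/|z|$ and estimates term by term; you propose a $|z|$-weighted norm, whereas the paper uses the plain sup-norm with $\|y\|\le 1/|q|$ and chooses $\rho=|q|/2$.

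There are, however, two genuine gaps. First, the relation \eqref{E13} cannot be \emph{derived} a posteriori from uniqueness as you propose in step (iv). The map $y(z)\mapsto |q|\,y(z/q)$ does not send solutions of \eqref{E11} to solutions: the coefficients are $z$-dependent and the nonlinearity scales inhomogeneously in $y$, so there is no symmetry to which uniqueness could be applied (note also that \eqref{E13} involves $|q|$, not $q$, so it is not a holomorphic conjugation of the equation). In the paper, \eqref{E13} is an \emph{assumption} defining the solution class --- uniqueness is asserted among solutions satisfying \eqref{E13} --- and it is used essentially in the estimates, supplying the factor $|y(z/q)|\le \frac{1}{|q|}\|y\|\le\frac{1}{|q|^2}$ that closes both the self-map bound and the contraction constant. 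Second, and relatedly, without \eqref{E13} your operator is not even well defined on a space of functions on $D(\rho)$: since $D(\rho)=\{z:|\Re(z)|\ge\rho\}$ is the complement of a vertical strip, it is not invariant under $z\mapsto z/q$, so $y(z/q)$ requires values of $y$ outside the domain. You flag this difficulty but do not resolve it; the paper's resolution is precisely the assumed scaling law, which defines $y(z/q)$ in terms of $y(z)$. Finally, your self-map estimate in step (i) does not close as written: from $|y(w)|\le C/|w|$ alone one gets $|y(z/q)|\le C|q|/|z|$, and the numerator bound is then of order $|q|^3/|z|^3$, which is $\le 1/|z|$ only if $\rho$ is taken large compared with $|q|^{3/2}$ --- an extra largeness requirement that you can only avoid by importing the decay $|y(z/q)|\le\frac{1}{|q|}\|y\|$ coming from \eqref{E13}.
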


Next, we study the existence of  local meromorphic solutions to \begin{align*}
y(qz)=\frac{a_1(z)y(z)+a_2(z)y(z)^2+\dots+a_p(z)y(z)^p}{1+b_1(z)y(z)+\cdots +b_t(z)y(z)^t} 
\end{align*}
in $$D(\rho,\sigma)=\left \{ z:\left | \Re(z) \right |\leq \rho,\left | \Im(z) \right |\leq \sigma   \right \},$$  
which differs from the domain $D(\rho)=\left \{ z:\left | \Re(z) \right |\geq \rho  \right \}$ that appears in Theorems~\ref{T1} and~\ref{T2}.
In Theorem~ \ref{T1} and ~\ref{T2}, we can extend a local meromorphic solution to the whole complex plane under the assumptions \eqref{E18} and \eqref{E13}, respectively. In the following theorems, since the local meromorphic solution is considered in the domain $D(\rho, \sigma),$ we cannot extend it meromorphically to the entire complex plane in the same way. However, we can extend the local meromorphic in $D(\rho, \sigma)$ meromorphically
to the entire complex plane $\mathbb{C}$  by using Lemma~\ref{L4}.

Moreover, we assume that all the coefficient functions $a_j(z)$ and $b_k(z)$ are bounded in a manner different from that in
Theorems~\ref{T1} and~\ref{T2}.

\begin{theorem}\label{T3}
Let $q\in \mathbb{C}$ satisfy $\left | q \right |\geq 6,$ and let $a_1(z),\dots,a_p(z);$ $b_1(z),\dots,b_q(z)$ be meromorphic functions such that, there are $\rho >0, \sigma >0$ for which
\begin{align}\label{E30}
\lvert a_1(z) \rvert = \lambda \le \frac{1}{\lvert q \rvert}, \,\,
\lvert a_j(z) \rvert \le \lvert q \rvert^{\lvert z \rvert}, \,\,
\lvert b_k(z) \rvert \le \lvert q \rvert^{\lvert z \rvert},\,\, 
2 \le j \le p, \ 1 \le k \le t,
\end{align} \begin{align*}
z\in D(\rho,\sigma)=\left \{ z:\left | \Re(z) \right |\leq \rho,\left | \Im(z) \right | \leq \sigma   \right \}. 
\end{align*}
Assume that $y(0)\ne \infty.$
Then \begin{align}\label{E40}
y(qz)=\frac{a_1(z)y(z)+a_2(z)y(z)^2+\dots+a_p(z)y(z)^p}{1+b_1(z)y(z)+\cdots +b_t(z)y(z)^t} 
\end{align} has a unique meromorphic solution in $\mathbb{C}$ satisfying \begin{align}\label{E23}
y\left ( \frac{z}{q^m}  \right )=\frac{1}{\left | q \right |^m }y(z),\,\,\,m\in\mathbb{N},\,\,\,z\in D(\rho).   
\end{align}
\end{theorem}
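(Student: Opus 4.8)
The plan is to follow the Banach fixed point scheme used for Theorem~\ref{T1}, re-doing the estimates with the new bounds \eqref{E30} on the bounded rectangle $D(\rho,\sigma)$, and then to extend the resulting local solution to $\mathbb{C}$ by Lemma~\ref{L4}. I would begin by recording two structural facts. First, since \eqref{E30} bounds every $a_j$ and $b_k$ on $D(\rho,\sigma)$, none of these functions has a pole in $D(\rho,\sigma)$, so all the coefficients are holomorphic there; in particular $a_1$, having constant modulus $\lambda\le 1/|q|$ on $D(\rho,\sigma)$, is a constant. Second, since $|q|\ge 6$ we have $|z/q|\le|z|/6$, so after shrinking $\rho$ and $\sigma$ if necessary---which is harmless, as \eqref{E30} persists on every smaller rectangle---the dilation $z\mapsto z/q$ maps $D(\rho,\sigma)$ into itself. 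This lets me rewrite \eqref{E40} in the equivalent ``backward'' form $y(z)=R\!\left(z/q,\;y(z/q)\right)$, a self-contained relation on $D(\rho,\sigma)$.

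For the fixed point step I would take $X$ to be the complete metric space of functions $g$ holomorphic on $D(\rho,\sigma)$ with $g(0)=0$ and $\sup_{D(\rho,\sigma)}|g|\le\delta$, under the supremum metric, and define $(Tg)(z)=R\!\left(z/q,\;g(z/q)\right)$. The role of the hypotheses is that on the \emph{bounded} set $D(\rho,\sigma)$ the bounds $|a_j(z)|,|b_k(z)|\le|q|^{|z|}$ become a uniform bound $|a_j|,|b_k|\le M$ with $M:=|q|^{\sqrt{\rho^{2}+\sigma^{2}}}$, so that for $\|g\|\le\delta$ with $\delta$ small the denominator $1+\sum_k b_k(z/q)\,g(z/q)^{k}$ stays within $\tfrac12$ of $1$ and
\[
\bigl|(Tg)(z)\bigr|\ \le\ \frac{\lambda\,\delta+\sum_{j\ge 2}M\,\delta^{j}}{1-\sum_{k}M\,\delta^{k}}\ \le\ \Bigl(\tfrac1{|q|}+C_{M}\,\delta\Bigr)\delta\ \le\ \delta,
\]
while $(Tg)(0)=R(0,0)=0$ and no new poles are introduced; hence $T(X)\subseteq X$. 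An entirely analogous estimate for $\bigl|(Tg_1)(z)-(Tg_2)(z)\bigr|$ shows that $T$ is a contraction with ratio at most $\tfrac1{|q|}+C_{M}\delta$, which is $<1$ for $\delta$ small; this is exactly where the hypotheses $\lambda\le 1/|q|$ and $|q|\ge 6$ (in place of the weaker $|q|\ge 3$ of Theorem~\ref{T1}) are used, namely to keep the leading term $\tfrac1{|q|}$ strictly below $1$.

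The Banach fixed point theorem then gives a unique $y\in X$ solving \eqref{E40} on $D(\rho,\sigma)$; uniqueness within the class of the theorem follows as in Theorem~\ref{T1}, because any $\mathbb{C}$-meromorphic solution of \eqref{E40} satisfying \eqref{E23} restricts to a fixed point of $T$ in $X$. The scaling identity \eqref{E23} is then obtained from $y(z)=R(z/q,y(z/q))$ together with the smallness of $y$ by a short induction on $m$. To pass from $D(\rho,\sigma)$ to all of $\mathbb{C}$, I would invoke Lemma~\ref{L4} after a conformal change of variable sending a suitable region onto the unit disk, in the manner of the Remark following Theorem~\ref{T1}---equivalently, one may use \eqref{E40} to push the local solution successively onto $qD(\rho,\sigma),q^{2}D(\rho,\sigma),\dots$, whose union is $\mathbb{C}$; this is the step where $y(0)\ne\infty$ is indispensable, since it is what keeps the extension meromorphic through the origin.

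The part I expect to be most delicate is the combined geometric and quantitative bookkeeping: choosing $\rho,\sigma$ (hence $M$) and then $\delta$ so that \emph{simultaneously} $z\mapsto z/q$ is a self-map of $D(\rho,\sigma)$, $T$ maps the $\delta$-ball into itself, the denominator stays bounded away from $0$ so that no spurious poles arise, and the contraction ratio is $<1$. Transporting the extension argument of the Remark from its half-plane setting to the rectangle $D(\rho,\sigma)$ through Lemma~\ref{L4} is the other point requiring care, though it should be routine once that lemma is in hand.
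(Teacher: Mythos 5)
Your overall strategy (Banach fixed point on a small ball of holomorphic functions on $D(\rho,\sigma)$, followed by extension via Lemma~\ref{L4}) matches the paper, but your operator is genuinely different. The paper Taylor-expands $R(z,y)$ about $y=0$, telescopes the linear term $\lambda y$ through infinitely many backward steps, and works with $T[y](z)=\sum_{j\ge2}\sum_{m\ge0}\lambda^m c_j(z/q^{m+1})\,y(z/q^{m+1})^j$, so that its operator has no linear part and the contraction constant comes from the quadratic vanishing; crucially, the paper's estimates for $T:X\to X$ and for the contraction ratio explicitly invoke the scaling relation \eqref{E23} as a standing assumption on the functions being estimated. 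You instead keep the linear term and use the one-step operator $(Tg)(z)=R(z/q,\,g(z/q))$, obtaining the contraction from $\lambda\le 1/|q|<1$ plus smallness of $\delta$. That part of your argument is sound (and in fact only needs $|q|>1$ with room for the $C_M\delta$ term), and your observation that the bounds \eqref{E30} collapse to a uniform bound $M=|q|^{\sqrt{\rho^2+\sigma^2}}$ on the bounded rectangle is the right simplification; your care about $z\mapsto z/q$ being a self-map of the rectangle is also warranted.

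The genuine gap is your treatment of \eqref{E23}. You assert that $y(z/q^m)=y(z)/|q|^m$ is ``obtained from $y(z)=R(z/q,y(z/q))$ together with the smallness of $y$ by a short induction on $m$.'' It is not: the functional equation gives $y(z)=\lambda\,y(z/q)+O\bigl(y(z/q)^2\bigr)$, a relation involving the factor $\lambda$ (only known to satisfy $\lambda\le 1/|q|$) and nonzero higher-order corrections, and no induction converts this into the exact modulus identity $y(z/q)=y(z)/|q|$. The paper does not derive \eqref{E23} either; it \emph{assumes} it (``Suppose that \eqref{E23} holds'') and uses it quantitatively in the bounds, so its uniqueness is uniqueness within the class of solutions satisfying \eqref{E23}, whereas yours is uniqueness within the $\delta$-ball. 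If you keep your operator you must either build \eqref{E23} into the definition of $X$ (and verify $T$ preserves it) or drop \eqref{E23} from the conclusion you claim. One further point, which your proposal inherits from the paper rather than introduces: the zero function lies in the ball, is fixed by both your $T$ and the paper's, and satisfies \eqref{E40} and \eqref{E23} trivially, so the ``unique fixed point'' delivered by the contraction principle is $y\equiv 0$ unless the space is normalized to exclude it.
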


Similarly, in the case where $a_1(z)\equiv 0,$  we present the following result. 
\begin{theorem}\label{T4}
Let $q\in \mathbb{C}$ satisfy $\left | q \right |\geq 6,$ and let $a_2(z),\dots,a_p(z);$ $b_1(z),\dots,b_q(z)$ be meromorphic functions such that, there are $\rho >0, \sigma >0$ for which
\begin{align}\label{E32}
\lvert a_j(z) \rvert \le \lvert q \rvert^{\lvert z \rvert},\,\, 
\lvert b_k(z) \rvert \le \lvert q \rvert^{\lvert z \rvert},\,\, 
2 \le j \le p, \ 1 \le k \le t,
\end{align} \begin{align*}
z\in D(\rho,\sigma)=\left \{ z:\left | \Re(z) \right |\leq \rho,\left | \Im(z) \right | \leq \sigma   \right \}. 
\end{align*}
Assume that $y(0)\ne \infty.$
Then \begin{align}\label{E39}
y(qz)=\frac{a_2(z)y(z)^2+\dots+a_p(z)y(z)^p}{1+b_1(z)y(z)+\cdots +b_t(z)y(z)^t} 
\end{align} has a unique meromorphic solution in $\mathbb{C}$ satisfying \begin{align}\label{E33}
y\left ( \frac{z}{q}  \right )=\frac{1}{\left | q \right | }y(z),\,\,\,z\in D(\rho). 
\end{align}    
\end{theorem}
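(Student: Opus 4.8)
The plan is to follow the same Banach fixed point scheme that underlies Theorem~\ref{T3}, specialized to the case $a_1\equiv 0$, and then to pass from a local solution to a solution meromorphic on $\mathbb{C}$ via Lemma~\ref{L4}. Since the right-hand side $R(z,y)$ of~\eqref{E39} satisfies $R(z,0)=0$, I would first recast~\eqref{E39} as a fixed-point problem for the operator
\[
(Ty)(z)=R\!\left(\tfrac{z}{q},\,y\!\left(\tfrac{z}{q}\right)\right)=\frac{a_2(z/q)\,y(z/q)^2+\cdots+a_p(z/q)\,y(z/q)^p}{1+b_1(z/q)\,y(z/q)+\cdots+b_t(z/q)\,y(z/q)^t}.
\]
Because $|q|\ge 6>1$, the map $z\mapsto z/q$ contracts toward the origin, so on a sufficiently small disk $\Delta_r=\{|z|\le r\}\subseteq D(\rho,\sigma)$ (chosen also to avoid the finitely many coefficient poles in that disk) one has $\Delta_{r/|q|}\subseteq\Delta_r$ and $T$ genuinely maps functions on $\Delta_r$ to functions on $\Delta_r$; moreover a fixed point of $T$ on $\Delta_r$ is exactly a (meromorphic) solution of~\eqref{E39} there. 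On $\Delta_r$ the bounds~\eqref{E32} give $|a_j(z/q)|,|b_k(z/q)|\le |q|^{\,|z|/|q|}\le M$ for a constant $M=M(q,r)$, so I would work in the complete metric space given by the closed ball $B_\delta=\{\,y:\|y\|\le\delta\,\}$ in the sup-norm over $\Delta_r$, with $\delta=\delta(q,r)$ small.

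The two estimates to be checked are the self-map and contraction properties, and here the structural gain of the case $a_1\equiv0$ is decisive: the numerator of $R(z,w)$ vanishes to order $2$ in $w$. Thus for $\|y\|\le\delta$ with $M\delta$ small the denominator stays above $\tfrac12$, whence $\|Ty\|\le 2M(p-1)\delta^2\le\delta$ once $\delta\le 1/(2M(p-1))$; and for the Lipschitz bound, estimating $|R(z/q,w_1)-R(z/q,w_2)|$ for $|w_1|,|w_2|\le\delta$ and using again that $R(z,\cdot)$ vanishes to second order at $0$, the difference quotient is $O(|w_1|+|w_2|)$, so $\|Ty_1-Ty_2\|\le L\delta\,\|y_1-y_2\|$ with $L=L(q,r)$, and shrinking $\delta$ once more gives $L\delta<1$. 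The explicit constant $|q|\ge 6$ is calibrated precisely so that these numerical inequalities (the gain from evaluating at $z/q$ against the loss coming from the coefficient growth $|q|^{|z|}$ and from the geometric series in the denominator) close simultaneously; carefully tracking all constants to confirm that $6$ suffices — rather than any conceptual obstruction — is the main technical point. One must also verify that the fixed point $y^\ast$ is genuinely meromorphic on $\Delta_r$ (not merely a uniform limit), which I would do by noting that the iterates $T^n 0$ are meromorphic on $\Delta_r$, converge uniformly there, and then invoking a Weierstrass/normal-families argument.

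Granting the fixed point $y^\ast\in B_\delta$, it is a meromorphic solution of~\eqref{E39} on $\Delta_{r/|q|}$, and the scaling identity~\eqref{E33} follows from the construction: either directly, since $z\mapsto|q|\,y^\ast(z/q)$ again lies in $B_\delta$ and solves the same functional equation, so uniqueness in $B_\delta$ forces it to equal $y^\ast$; or by iterating the defining relation together with the order-$2$ vanishing. To extend $y^\ast$ to $\mathbb{C}$ I would iterate the functional equation outward: define $y^\ast(w):=R(w/q,y^\ast(w/q))$ on $q\,\Delta_r$, which is consistent on the overlap and again meromorphic because $R$ is rational in $y^\ast$ with meromorphic coefficients and its denominator is not identically zero; here the hypotheses~\eqref{E32} on all of $D(\rho,\sigma)$ control the extension through that rectangle, and since $|q|\ge 6$ the sets $q^n\Delta_r$ exhaust $\mathbb{C}$. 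This countable continuation, together with the conformal transfer used in the Remark, is exactly the content of Lemma~\ref{L4}, and it yields a function meromorphic on all of $\mathbb{C}$. Finally, for uniqueness in $\mathbb{C}$ among solutions obeying~\eqref{E33}: any such $y$ satisfies $y(z/q^m)=|q|^{-m}y(z)\to0$, hence $y$ is $O(|z|)$ near $0$, so its restriction to a small disk lies in $B_\delta$ and agrees with $y^\ast$ there, and the functional equation propagates this identity to all of $\mathbb{C}$. The displayed equation in the statement is literally~\eqref{E39}, so no separate work is needed for it.
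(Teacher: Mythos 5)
Your local fixed-point construction is a genuinely different (and in some ways cleaner) route than the paper's: the paper expands $R(z,y)$ as a power series $\sum_{j\ge2}c_j(z)y^j$, bounds the $c_j$ explicitly, and runs the contraction on all of $D(\rho,\sigma)$ in the ball $\|y\|\le 1/|q|$, with the scaling relation \eqref{E33} fed into the estimates as a hypothesis; you instead work with $R$ directly on a small disk and a small ball $B_\delta$, using only the order-$2$ vanishing of the numerator, which makes the self-map and contraction estimates routine and independent of \eqref{E33} (and shows, incidentally, that $|q|\ge 6$ plays no real role in the local step). That part of your plan is sound.

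The genuine gap is your derivation of \eqref{E33}. The function $\tilde y(z)=|q|\,y^\ast(z/q)$ does \emph{not} solve \eqref{E39}: substituting, one would need $|q|\,R(z/q,u)=R(z,|q|u)$ with $u=y^\ast(z/q)$, and $R(z,\cdot)$ is not homogeneous of degree one in its second argument (it vanishes to order $2$), so uniqueness in $B_\delta$ cannot be invoked; moreover $\|\tilde y\|$ can be as large as $|q|\delta$, so $\tilde y$ need not even lie in $B_\delta$. The fallback of ``iterating the defining relation with the order-$2$ vanishing'' yields only $|y(z)|\le C\,|y(z/q)|^2$, i.e.\ decay, not the exact linear identity $y(z/q)=\tfrac{1}{|q|}y(z)$. (For an analytic $y=\sum a_nz^n$, that identity forces $a_n(|q|-q^n)=0$ for every $n$, hence $y\equiv 0$ unless $q$ is a positive real; so \eqref{E33} cannot simply ``follow from the construction'' of a nontrivial solution. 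The paper sidesteps this by \emph{assuming} \eqref{E33} inside its estimates rather than deriving it.) A related point you should confront explicitly: since $R(z,0)=0$, the zero function lies in $B_\delta$ and is a fixed point of your $T$, so the unique fixed point your contraction produces is $y^\ast\equiv 0$; as written, the existence part delivers only the trivial solution unless you exclude $0$ from the space or build nontrivial data into the scheme.
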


\section{proof of theorem \ref{T1}}

\begin{lemma}[\textit{Banach fixed point theorem}]\label{L2}
Let $T$ be a contraction mapping defined on a complete metric space $X.$ Then $T$ has a unique fixed point in $X.$
\end{lemma}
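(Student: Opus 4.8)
The plan is to use the classical Picard iteration argument. Let $c\in[0,1)$ denote the contraction constant, so that $d(Tx,Ty)\le c\,d(x,y)$ for all $x,y\in X$, where $d$ is the metric on $X$. First I would fix an arbitrary point $x_0\in X$ and define the orbit $x_{n+1}=Tx_n$ for $n\ge 0$. A one-line induction, using the contraction inequality at each step, gives $d(x_{n+1},x_n)\le c^{\,n}\,d(x_1,x_0)$ for every $n\ge 0$.

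Next I would show that $(x_n)$ is a Cauchy sequence. For $m>n$, the triangle inequality together with the bound just obtained yields
\[
d(x_m,x_n)\le\sum_{k=n}^{m-1}d(x_{k+1},x_k)\le d(x_1,x_0)\sum_{k=n}^{\infty}c^{\,k}=\frac{c^{\,n}}{1-c}\,d(x_1,x_0),
\]
and the right-hand side tends to $0$ as $n\to\infty$ because $c<1$. Since $X$ is complete, the sequence converges to some $x^\ast\in X$. To see that $x^\ast$ is a fixed point, note that a contraction is in particular Lipschitz, hence continuous, so $Tx^\ast=T(\lim_n x_n)=\lim_n Tx_n=\lim_n x_{n+1}=x^\ast$. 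For uniqueness, if $x^\ast$ and $y^\ast$ are both fixed points then $d(x^\ast,y^\ast)=d(Tx^\ast,Ty^\ast)\le c\,d(x^\ast,y^\ast)$, so $(1-c)\,d(x^\ast,y^\ast)\le 0$, which forces $d(x^\ast,y^\ast)=0$ and therefore $x^\ast=y^\ast$.

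The argument is entirely routine and presents no real obstacle; the only point that deserves a moment's care is the geometric-series estimate that establishes the Cauchy property, and this is precisely the step where completeness of $X$ is indispensable, since without it the limit of the iterates need not lie in $X$. I would present the proof in the order above: iteration, monotone decay of consecutive distances, Cauchy estimate, completeness, continuity of $T$ to identify the limit as a fixed point, and finally the contraction inequality again to get uniqueness.
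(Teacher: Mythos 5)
Your proof is the standard Picard-iteration argument and is correct in every step: the geometric decay of consecutive distances, the Cauchy estimate, the use of completeness to obtain the limit, continuity of $T$ to identify it as a fixed point, and the contraction inequality for uniqueness. Note that the paper itself does not prove this lemma at all --- it states the Banach fixed point theorem as a classical result and immediately uses it (the proof environment following the lemma in the source is actually the proof of Theorem~\ref{T1}) --- so there is no competing argument to compare against; your write-up simply supplies the standard proof the paper takes for granted.
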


\begin{proof}
$\mathbf{Step 1:}$ We will now transform equation \eqref{E1} into a more convenient form. Let $\rho=\left | q \right |\geq 3.$ Then all the coefficient functions $a_j(z),$ $b_k(z)$ have no poles in $D(\rho).$
Let $I$ denote a set of multi-indices $\tau =(j_1,\dots,j_t),$ where $j_k \in  \mathbb{N} \cup \left \{ 0 \right \} $ for all $k=1, \dots , t.$ The degree of $\tau$ is defined to be
$$d(\tau)=j_1+j_2+\dots+j_t$$
and the weight of $\tau$ is $$w(\tau)=j_1+2j_2+\dots+tj_t.$$
Let $R(z,y(z))=\frac{a_1(z)y(z)+a_2(z)y(z)^2+\dots+a_p(z)y(z)^p}{1+b_1(z)y(z)+\cdots +b_t(z)y(z)^t}.$ Then $R(z,0)=0.$ This implies $y(z)=0$ is a fixed point of $R(z,y(z)).$ By combining the Multinomial theorem, we obtain
\begin{align}\label{E2}
y(qz)&=R(z,y(z))\\\nonumber 
&=\left ( \sum_{j=1}^{p}a_j(z)y(z)^j  \right )\frac{1}{1+\sum_{k=1}^{t}b_k(z)y(z)^{k}}\\\nonumber
&=\left ( \sum_{j=1}^{p}a_j(z)y(z)^j  \right )\sum_{n=0}^{\infty}\left ( -\sum_{k=1}^{t}b_k(z)y(z)^k  \right )^n\\\nonumber  
&= \sum_{j=1}^{p}\sum_{n=0}^{\infty}\sum_{\tau \in I }(-1)^n\frac{n!}{j_1!j_2!\dots j_t!}a_j(z)b_1(z)^{j_1} b_2(z)^{j_2}\dots b_t(z)^{j_t}y(z)^{w(\tau)+j}, \\\nonumber  
\end{align} 
which is Taylor series of $R(z,y(z))$ around the fixed point $\gamma=0.$ 
Note that $ \left | a_1(z) \right | \equiv \lambda,$ then $a_1(z)=\lambda e^{i\theta }.$ 
In the following, we assume $a_1(z)\equiv\lambda.$ Other cases  can be discussed similarly.
By \eqref{E2}, we can find that 
\begin{align}\label{E3}
y(qz)-\lambda y(z)=\sum_{j=2}^{\infty} c_j(z)y(z)^j,  
\end{align}
where the coefficients $c_j(z)$ are meromorphic functions of $z,$ depending on $a_j(z),$ $b_k(z).$ Also, together with \eqref{E2} and \eqref{E3}, we have the following:
\begin{align*}
\begin{cases}
 c_2(z)=a_2(z)-a_1(z)b_1(z)\\
 c_3(z)=a_3(z)-a_2(z)b_1(z)-a_1(z)b_2(z)+a_1(z)b_1(z)^2\\
 c_4(z)=a_4(z)-a_3(z)b_1(z)+a_2(z)b_1(z)^2
 -a_2(z)b_2(z)-a_1(z)b_1(z)^3\\
 \phantom{c_4(z)=}\,+a_1(z)b_1(z)b_2(z)+a_1(z)b_1(z)b_2(z)-a_1(z)b_3(z)\\
\dots 
\end{cases}
\end{align*}
Inductively, we have that $c_j(z)$ consists of $2^{j-1}$ terms, and the only term in $c_j(z)$ of degree $j$ is $\pm a_1(z)b_1(z)^{j-1}
.$ Then we have

\begin{align}\label{E7}
\left | c_j(z) \right |&\leq \sum_{1\leq k\leq j-1}\underset{i  = 1,\dots,p}{\max}\left | a_i(z) \right |\underset{i  = 1,\dots,t}\max\left | b_i(z) \right |^{k-1}+\left | a_1(z) \right |\left | b_1(z) \right |^{j-1}\\\nonumber
&\leq  \left ( 2^{j-1}-1 \right )\frac{1}{\left | z \right | }+\frac{1}{\left | z \right |^2 }\\\nonumber
&<2^{j-1}\frac{1}{\left | z \right | }.          
\end{align}
By equation \eqref{E3}, we have 
\begin{align}\label{E4}
\left\{
\begin{array}{l}
y(qz) - \lambda y(z) = \displaystyle\sum_{j=2}^{\infty} c_j(z)\, y(z)^j, \\[6pt]
\lambda y(z) - \lambda^2 y\!\left( \tfrac{z}{q} \right)
   = \displaystyle\sum_{j=2}^{\infty} \lambda\, c_j\!\left( \tfrac{z}{q} \right)
     y\!\left( \tfrac{z}{q} \right)^j, \\[6pt]
\lambda^2 y\!\left( \tfrac{z}{q} \right) - \lambda^3 y\!\left( \tfrac{z}{q^2} \right)
   = \displaystyle\sum_{j=2}^{\infty} \lambda^2 c_j\!\left( \tfrac{z}{q^2} \right)
     y\!\left( \tfrac{z}{q^2} \right)^j, \\[4pt]
\vdots
\end{array}
\right.
\end{align}
By adding together all the equation in \eqref{E4}, we obtain \begin{align}\label{E5}
y(qz)-\lambda^{k+1}y\left ( \frac{z}{q^k}  \right )=\sum_{j=2}^{\infty}\sum_{m=0}^{k}\lambda^mc_j\left ( \frac{z}{q^m}  \right )y\left ( \frac{z}{q^m}  \right )^j.      
\end{align}
Note that $\rho=\left | q \right |,$ then 
\begin{align}\label{E12}
\left | z \right |\geq \left | \Re(z) \right |\geq \rho=\left | q \right | \geq 3.
\end{align}  
Thus, $\lambda\leq\frac{1}{3}.$ Since $y(0)\ne \infty,$
letting $k\to\infty $ in \eqref{E5} yields that \begin{align}\label{E6} 
y(qz)=\sum_{j=2}^{\infty} \sum_{m=0}^{\infty}\lambda^mc_j
\left ( \frac{z}{q^m}  \right )y\left ( \frac{z}{q^m}  \right )^j. 
 \end{align}
Then $$y(z)=\sum_{j=2}^{\infty}\sum_{m=0}^{\infty} \lambda^mc_j\left ( \frac{z}{q^{m+1}}  \right )y\left ( \frac{z}{q^{m+1}}  \right )^j.$$

$\mathbf{Step 2:}$
We next define a Banach space $X$ and  prove that $T$ maps $X$ into itself.
Define an operator $T$ by 
\begin{align}\label{E8}
T[y](z)=\sum_{j=2}^{\infty}\sum_{m=0}^{\infty} \lambda^mc_j\left ( \frac{z}{q^{m+1}}  \right )y\left ( \frac{z}{q^{m+1}}  \right )^j=\sum_{j=2}^{\infty}\sum_{m=0}^{\infty}P_{mj}(z) .
\end{align}
Let $X$ be the set of all functions $z\to y(z),$ analytic and bounded in $$D(\rho)=\left \{ z:\left | \Re(z) \right | \geq \rho  \right \},$$ and let 
\begin{align}\label{E22} 
\left | \left | y \right |  \right |=\underset{z\in D(\rho)}{\sup }\left | y(z) \right |\leq \frac{1}{\left | q \right | }.
\end{align}
Clearly, $X$ is a metric space under the sup-norm and is a vector space. Moreover, if a sequence $\left \{ y_n(z) \right \} $ of $X$ is Cauchy, then $\lim_{n \to \infty}y_n(z)=y(z)$ exists uniformly in $D(\rho).$ Furthermore, Weierstrass's theorem shows that $y(z)$ is analytic in $D(\rho),$ and $\forall \epsilon >0,$ there exists $N\in \mathbb{N}$ such that $$\left | \left | y \right |  \right |=\left | \left | y-y_N+y_N \right |  \right |\leq \left | \left | y-y_N \right |  \right |+\left | \left | y_N \right |  \right |<\epsilon+\frac{1}{\left | q \right | }  .$$
Since $\epsilon$ is an arbitrarily positive number, we have $\left | \left | y \right |  \right |\leq \frac{1}{\left | q \right | } .$ 
This means that $X$ is a complete metric space. Note that $X$ is also a vector space, then $X$ is a Banach space. Assume that \eqref{E18} holds.
By combining \eqref{E18}, \eqref{E7}, \eqref{E12}, \eqref{E8}, \eqref{E22}, \begin{align}\label{E14}
\frac{1}{\left | q \right |^j-1 }\leq \frac{2}{\left | q \right |^j } ,\,\,\,\left | q \right |\geq 2, j\geq 2,
\end{align}
and
\begin{align}\label{E31}
\frac{4}{\left | q \right |^4-2\left | q \right |^2  }\leq \frac{1}{\left | q \right | },\,\,\, \left | q \right |\geq 2, 
\end{align}
we have 
\begin{align}\label{E9}
\left | T[y](z) \right |  =& \left | \sum_{j  = 2}^{\infty}\sum_{m  = 0}^{\infty}P_{mj}(z)   \right |\\\nonumber
\leq& \sum_{j=2}^{\infty}\sum_{m=0}^{\infty} 2^{j-1}\frac{\left | q \right |^{m+1} }{\left | z \right | }\left ( \frac{1}{\left | q \right |^{m+1} } \right )^{j} \left ( \frac{1}{\left | q \right | }  \right )^{j}\frac{1}{\left | z \right |^m }\\\nonumber     
\leq& \sum_{j=2}^{\infty}\sum_{m=0}^{\infty}2^{j-1}\left ( \frac{1}{\left | q \right |^{m+1} }  \right )^j\frac{1}{\left | q \right |^j }\\\nonumber
=&\sum_{j=2}^{\infty}\frac{1}{2}\left ( \frac{2}{\left | q \right | }  \right )^j\sum_{m=0}^{\infty}\left ( \frac{1}{\left | q \right |^{m+1} }  \right )^j\\\nonumber
\leq &\sum_{j=2}^{\infty}\frac{1}{2}\left ( \frac{2}{\left | q \right | }  \right )^j\frac{2}{\left | q \right |^j }\\\nonumber
\leq& \frac{1}{\left | q \right | }               
\end{align}
for all $z\in D(\rho).$
Also, from \eqref{E9} we can see that $$\left | P_{mj}(z) \right |=\left | \lambda^mc_j\left ( \frac{z}{q^{m+1}}  \right )y\left ( \frac{z}{q^{m+1}}  \right )^j    \right |\leq 2^{j-1}\left ( \frac{1}{\left | q \right |^{m+2} }  \right )^j, $$
and
$$\sum_{j=2}^{\infty}\sum_{m=0}^{\infty}2^{j-1}\left ( \frac{1}{\left | q \right |^{m+2} }  \right )^j=\sum_{j=2}^{\infty}\frac{1}{2}\left ( \frac{2}{\left | q \right | }  \right )^j\sum_{m=0}^{\infty}\left ( \frac{1}{\left | q \right |^{m+1} }  \right )^j<\infty.$$
By the Weierstrass M-test, $T[y](z)$ 
 converges absolutely and uniformly in $D(\rho),$ and so $T[y](z)$ is analytic in $D(\rho).$ By combining \eqref{E9}, we complete the proof that $T:X\to X.$

$\mathbf{Step 3:}$ We now proceed to prove that $T$ is a contraction mapping. Let $y,h \in X.$ Then by combining \eqref{E18}, \eqref{E7}, \eqref{E12}, \eqref{E8}, \eqref{E22}, \eqref{E14}, and \eqref{E31},
it follows that
\begin{align}
&\left | T[y](z)-T[h](z) \right |\\\nonumber
=& \left | \sum_{j=2}^{\infty}\sum_{m=0}^{\infty}\lambda^m c_j \left ( \frac{z}{q^{m+1}}  \right )\left ( y\left ( \frac{z}{q^{m+1}}  \right )^j-h\left ( \frac{z}{q^{m+1}}  \right )^j  \right )   \right |\\\nonumber
\leq &\sum_{j=2}^{\infty}\sum_{m=0}^{\infty}2^{j-1}\left | \frac{1}{q^{m+1}}  \right |\left | \left | y-h \right |  \right |\left ( \frac{1}{\left | q \right |^{m+2} }  \right )^{j-1}j\\\nonumber     
=&\sum_{j=2}^{\infty}\left ( \frac{2}{\left | q \right | }  \right )^{j-1}j\sum_{m=0}^{\infty}\frac{1}{\left | q \right |^{(m+1)j} }\left | \left | y-h \right |  \right |\\\nonumber
\leq &\sum_{j=2}^{\infty}\left ( \frac{2}{\left | q \right | }  \right )^{j-1}j\frac{2}{\left | q \right |^j } \left | \left | y-h \right |  \right |\\\nonumber
=&\sum_{j=2}^{\infty}\left ( \frac{2}{\left | q \right |^2 }  \right )^{j-1}j\frac{2}{\left | q \right | }\left | \left | y-h \right |  \right |\\\nonumber
=&\frac{2}{\left | q \right | }\frac{4\left | q \right |^2-4 }{\left ( \left | q \right |^2-2  \right )^2 }\left | \left | y-h \right |  \right |  \\\nonumber 
< &\frac{2}{\left | q \right | }\left | \left | y-h \right |  \right |.       \end{align}
Therefore, \begin{align}\label{E10}
    \left | \left | T[y](z)-T[h](z) \right |  \right |\leq\frac{2}{3}  \left | \left | y-h \right |  \right |. 
\end{align}
That is, $T$ is a contraction mapping.

$\mathbf{Step 4:}$
We now prove that \eqref{E1} admits a global meromorphic solution in $\mathbb{C}.$ 
By Lemma \ref{L2}, we conclude that $T$ is a contraction mapping on $X$ and so has a unique fixed point. That is, there exists an analytic function $y\in X$ such that \begin{align*}
y(z)=T[y](z)=\sum_{j=2}^{\infty}\sum_{m=0}^{\infty} \lambda^mc_j\left ( \frac{z}{q^{m+1}}  \right )y\left ( \frac{z}{q^{m+1}}  \right )^j
\end{align*}
is a solution of equation \eqref{E1} in $D(\rho).$ On the other hand, by the assumption in \eqref{E18}, the local meromorphic solution of \eqref{E1} in $D(\rho)$ can be extended meromorphically to the entire complex plane $\mathbb{C}.$
\end{proof}

\section{proof of theorem \ref{T2}}

\begin{proof}
Some details are the same as those in Step 1 of the proof of Theorem \ref{T1}, to which we refer.
We do not prove these details here.
Let $\rho =\frac{\left | q \right | }{2}.$ Then 
\begin{align}\label{E19}
\left | z \right |\geq \left | \Re(z) \right | \geq \rho=\frac{\left | q \right | }{2}>1.   
\end{align}
This means that all the coefficient functions  $a_j(z),$ $b_k(z)$ have no poles in $D(\rho).$
Since $\lambda =a_1(z)\equiv 0,$  by referring to \eqref{E3} we have \begin{align*}
y(qz)=\sum_{j=2}^{\infty}c_j(z)y(z)^j. 
\end{align*}
Then \begin{align}\label{E15}
y(z)=\sum_{j=2}^{\infty}c_j\left ( \frac{z}{q}  \right )y\left ( \frac{z}{q}  \right )^j.   
\end{align}
Similarly, referring to \eqref{E7},
we have \begin{align}\label{E16}
\left | c_j(z) \right |&\leq \sum_{1\leq k\leq j-1}\underset{i  = 1,\dots,p}{\max}\left | a_i(z) \right |\underset{i  = 1,\dots,t}\max\left | b_i(z) \right |^{k-1}+\left | a_1(z) \right |\left | b_1(z) \right |^{j-1}\\\nonumber
&\leq  \left ( 2^{j-1}-1 \right )\frac{1}{\left | z \right | }\\\nonumber
&<2^{j-1}\frac{1}{\left | z \right | }.    
\end{align}
Define an operator $T$ by 
\begin{align}\label{E17}
T[y](z)=\sum_{j=2}^{\infty}c_j\left ( \frac{z}{q}  \right )y\left ( \frac{z}{q}  \right )^j=\sum_{j=2}^{\infty}P_j(z) .   
\end{align}
Let $X$ be the set of all functions $z\to y(z),$ analytic and bounded in $$D(\rho)=\left \{ z:\left | \Re(z) \right | \geq \rho  \right \},$$ and let \begin{align}\label{E21} 
\left | \left | y \right |  \right |=\underset{z\in D(\rho)}{\sup }\left | y(z) \right |\leq \frac{1}{\left | q \right | }.
\end{align}
The proof that $X$ is a Banach space under the super-norm is given in the proof of Theorem \ref{T1}; we do not repeat it here. 
We will next prove that $T$ is a contraction mapping. We first prove that $T: X\to X.$
Assume that \eqref{E13} holds. Together with \eqref{E13}, \eqref{E19}, \eqref{E16}, and \eqref{E21}, we obtain
\begin{align}\label{E20}
\left | P_j(z) \right |=&\left | c_j\left ( \frac{z}{q}  \right )y\left ( \frac{z}{q}  \right )^j   \right |\\\nonumber
  \leq & \frac{\left | q \right | }{\left | z \right | }2^{j-1}\frac{1}{\left | q \right |^j } \frac{1}{\left | q \right |^j }\\\nonumber
=&\frac{\left | q \right | }{2\left | z \right | }\left ( \frac{2}{\left | q \right |^2 }  \right )^j\\\nonumber
\leq& \left ( \frac{2}{\left | q \right |^2 }  \right )^j      
\end{align}
for all $z\in D(\rho).$ 
Since $\sum_{j=2}^{\infty}\left ( \frac{2}{\left | q \right |^2 }  \right )^j  $ converges, it follows from \eqref{E17} and \eqref{E20}, by the Weierstrass M-test, that $T[y](z)$ converges absolutely and uniformly in $D(\rho),$ so $T[y](z)$ is analytic in $D(\rho).$
On the other hand,  \eqref{E17} and \eqref{E20} together yield
\begin{align}
\left | T[y](z) \right |=\left | \sum_{j=2}^{\infty}P_j(z) \right |  \leq \sum_{j  = 2}^{\infty}\left ( \frac{2}{\left | q \right |^2 }  \right )^j\leq \frac{4}{\left | q \right |^{4}-2\left | q \right |^{2}  }\leq \frac{1}{\left | q \right | }.    \end{align}
Therefore, $T$ maps $X$ to itself. We now proceed to prove that $T$ is a contraction mapping. Let $y,h\in X.$ Then by combining \eqref{E13}, \eqref{E19}, \eqref{E16}, \eqref{E17}, and \eqref{E21}, we have 
\begin{align*}
&\left | T[y](z)-T[h](z) \right |\\\nonumber
\leq& \sum_{j=2}^{\infty}\left | c_j \left ( \frac{z}{q}  \right ) \right |\left | y\left ( \frac{z}{q}  \right )-h\left ( \frac{z}{q}  \right )   \right |\left | y\left ( \frac{z}{q}  \right )^{j-1}+\dots+h\left ( \frac{z}{q}  \right )^{j-1}     \right |\\
\leq &\sum_{j=2}^{\infty}2^{j-1}\frac{\left | q \right | }{\left | z \right | }\left | \left | y-h \right |  \right |\frac{1}{\left | q \right | }\frac{1}{\left | q \right |^{j-1} }j\frac{1}{\left | q \right |^{j-1} }\\
=&\frac{1}{\left | z \right | }\sum_{j=2}^{\infty}j\left ( \frac{2}{\left | q \right |^2 }  \right )^{j-1}\left | \left | y-h \right |  \right |\\               
< &\frac{1}{\left | z \right | }\left | \left | y-h \right |  \right |.    
\end{align*}
This implies that $$\left | \left | T[y](z)-T[h](z) \right |  \right |\leq \frac{1}{\left | z \right | }\left | \left | y-h \right |  \right | .$$
From \eqref{E19}, we find that $\frac{1}{\left | z \right | }<1.$
Hence, $T$ is a contraction mapping. By referring to Step $4$ of the proof of Theorem \ref{E1}, we conclude that \begin{align*}
y(z)=T[y](z)=\sum_{j=2}^{\infty}c_j\left ( \frac{z}{q}  \right )y\left ( \frac{z}{q}  \right )^j 
\end{align*}
is a meromorphic solution to \eqref{E11} in the complex plane $\mathbb{C}.$
\end{proof}

\section{proof of theorem \ref{T3}}
\begin{lemma}\cite{Gundersen2002}\label{L4}
If $\left | q \right |>1 ,$ then any local meromorphic solution to \eqref{E40}
around the origin has a meromorphic continuation over the whole complex plane.    
\end{lemma}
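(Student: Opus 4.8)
The plan is to use the functional equation itself to bootstrap the domain of the solution outward from the origin, multiplying its radius by $|q|$ at each stage. Let $y$ be a meromorphic solution of \eqref{E40} on a disc $D(0,r)$, $r>0$ (poles, including one at the origin, are allowed). Solving the equation for the value at the dilated point — i.e. replacing $z$ by $w/q$ — gives the identity
\[
y(w)=R\!\left(\tfrac{w}{q},\,y\!\left(\tfrac{w}{q}\right)\right)
=\frac{\sum_{j=1}^{p}a_j(w/q)\,y(w/q)^{j}}{1+\sum_{k=1}^{t}b_k(w/q)\,y(w/q)^{k}},
\]
which, read from right to left, defines a candidate extension of $y$ to the larger disc $D(0,|q|r)$, since $w\mapsto y(w/q)$ is meromorphic there.

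First I would verify that this candidate is genuinely meromorphic on $D(0,|q|r)$. Since each $a_j,b_k$ is meromorphic on $\mathbb{C}$ and $w\mapsto y(w/q)$ is meromorphic on $D(0,|q|r)$, the numerator $P(w):=\sum_{j=1}^{p}a_j(w/q)y(w/q)^{j}$ and the denominator $Q(w):=1+\sum_{k=1}^{t}b_k(w/q)y(w/q)^{k}$ are meromorphic on $D(0,|q|r)$; moreover $Q\not\equiv 0$, for otherwise the right-hand side of \eqref{E40} would be meaningless on $D(0,r)$, contradicting that $y$ solves it. Hence $P/Q$ is meromorphic on $D(0,|q|r)$, and by the equation it coincides with $y$ on $D(0,r)$ (for $w\in D(0,r)$ one has $w/q\in D(0,r)$ and $y(w)=y(q\cdot w/q)=R(w/q,y(w/q))$). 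By the identity theorem this extension is unique, and a short computation shows it again satisfies \eqref{E40}, now on $D(0,r)$.

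Next I would iterate. Setting $\rho_0=r$ and $\rho_{n+1}=|q|\rho_n$, the step above applied to the extension on $D(0,\rho_n)$ yields a meromorphic extension to $D(0,\rho_{n+1})$ satisfying \eqref{E40} on $D(0,\rho_n)$. These extensions are mutually compatible, and since $|q|>1$ we have $\rho_n=|q|^n r\to\infty$, so $\bigcup_{n\ge0}D(0,\rho_n)=\mathbb{C}$; gluing gives a single meromorphic function on $\mathbb{C}$ that extends $y$ and satisfies \eqref{E40} everywhere.

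The only step that is not pure bookkeeping — and hence the main obstacle — is confirming that the composition $R(w/q,y(w/q))$ does not develop a worse-than-pole singularity at the poles of $y(\cdot/q)$ or at the zeros of $Q$, i.e. that no essential singularity is created. This is handled by the elementary calculus of meromorphic functions: near a pole of $y(\cdot/q)$ one divides numerator and denominator of $R$ by a suitable power of $y(\cdot/q)$ and reads off that $R$ tends to $a_p/b_t$, to $0$, or to $\infty$ according as $p=t$, $p<t$, or $p>t$, each case being locally meromorphic; the zeros of $Q$ are isolated (as $Q\not\equiv0$) and contribute at worst poles. With this local meromorphy in hand, the statement reduces to the geometric fact $|q|^nr\to\infty$, exactly as in \cite{Gundersen2002}.
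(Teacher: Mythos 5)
The paper itself gives no proof of this lemma --- it is quoted from \cite{Gundersen2002} --- but your argument is precisely the standard iteration used there: pull the equation back to define $y(w)=R\bigl(w/q,\,y(w/q)\bigr)$ on $D(0,|q|r)$, check agreement with $y$ on $D(0,r)$ via the identity theorem, and let $|q|^{n}r\to\infty$; this is correct. The only comment is that the ``main obstacle'' in your last paragraph is a non-issue: since $w\mapsto y(w/q)$ and the coefficients are meromorphic and the denominator $Q\not\equiv 0$, the quotient $P/Q$ is automatically meromorphic on $D(0,|q|r)$ (the meromorphic functions on a connected domain form a field), so the case analysis on $p$ versus $t$ at poles of $y(\cdot/q)$ is not needed.
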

\begin{proof}
Let $\rho^2+\sigma ^2\leq \left | q \right |^2  .$ Then \begin{align}\label{E27} 
\left | z \right |=\sqrt{\left | \Re(z) \right |^2 +\left | \Im(z) \right |^2 }\leq \sqrt{\rho^2+\sigma^2}  \leq \left | q \right |  .
\end{align} This means that all the coefficient functions $a_j(z),$ $b_k(z)$ have no poles in $D(\rho, \sigma).$
Referring to Step $1$ of the proof of Theorem \ref{T1}, we find that  \begin{align}
y(z)=\sum_{j=2}^{\infty}\sum_{m=0}^{\infty}\lambda ^m c_j\left ( \frac{z}{q^{m+1}}  \right ) y\left ( \frac{z}{q^{m+1}}  \right )^j, 
\end{align}
and \begin{align}\label{E24}
\left | c_j(z) \right |&\leq \sum_{1\leq k\leq j-1}\underset{i  = 1,\dots,p}{\max}\left | a_i(z) \right |\underset{i  = 1,\dots,t}\max\left | b_i(z) \right |^{k-1}+\left | a_1(z) \right |\left | b_1(z) \right |^{j-1}\\\nonumber
&<  \left ( 2^{j-1}-1 \right )\left | q \right |^{\left | z \right |(j-1)}+\left | q \right |^{\left | z \right |(j-1)}  \\\nonumber
&=\left | q \right |^{\left | z \right |(j-1) }2^{j-1}.     
\end{align}
Define an operator $T$ by 
\begin{align}\label{E25}
T[y](z)=\sum_{j=2}^{\infty}\sum_{m=0}^{\infty}\lambda ^m c_j\left ( \frac{z}{q^{m+1}}  \right )y\left ( \frac{z}{q^{m+1}}  \right )^{j}=\sum_{j=2}^{\infty}\sum_{m=0}^{\infty}P_{mj}(z)    
\end{align}
Let $X$ be the set of all functions $z\to y(z),$ analytic and bounded in $$D(\rho,\sigma )=\left \{ z:\left | \Re(z) \right |\leq \rho,\left | \Re(z) \right |\leq \sigma    \right \},$$ and let \begin{align}\label{E26} 
\left | \left | y \right |  \right |=\underset{z\in D(\rho,\sigma)}{\sup }\left | y(z) \right |\leq \frac{1}{\left | q \right | }.
\end{align}
The proof that $X$ is a Banach space under the super-norm is given in the proof of Theorem \ref{T1}; we omit it here. We now prove that $T: X\to X.$ Suppose that \eqref{E23} holds.   Together, \eqref{E23}, \eqref{E27}, \eqref{E24}, \eqref{E25}, \eqref{E26}, and the fact that 
\begin{align}\label{E28}
\frac{\left | q \right |^{j+1} }{\left | q \right |^{j+1}-1 }<\left ( \frac{\left | q \right | }{4}  \right )^j,\,\,\,\left | q \right |\geq 6,   
\end{align}
imply that
\begin{align}\label{E29}
&\left | T[y](z) \right |\\\nonumber
=& \left | \sum_{j=2}^{\infty}\sum_{m=0}^{\infty}P_{mj}(z) \right |       \\\nonumber
\leq &   \sum_{j=2}^{\infty}\sum_{m=0}^{\infty}\left | \lambda ^m c_j\left ( \frac{z}{q^{m+1}}  \right )y\left ( \frac{z}{q^{m+1}}  \right )^j  \right |\\\nonumber
\leq &\sum_{j=2}^{\infty}\sum_{m=0}^{\infty}\frac{1}{\left | q \right |^{m} }2^{j-1}\left | q \right |^{j-1}\left | \frac{1}{q^{m+1}}  \right |^{j}\frac{1}{\left | q \right |^j }\\\nonumber
=&\sum_{j=2}^{\infty}\frac{1}{2\left | q \right | }\left ( \frac{2 }{\left | q \right | }  \right )^j\sum_{m=0}^{\infty}\left ( \frac{1}{\left | q \right |^{j+1} }  \right )^m\\\nonumber
<& \sum_{j=2}^{\infty}\frac{1}{2\left | q \right | }\left ( \frac{2}{\left | q \right | }  \right )^{j}\left ( \frac{\left | q \right | }{4}  \right )^{j}      \\\nonumber
=&\frac{1}{4\left | q \right | }<\frac{1}{\left | q \right | }. \end{align}
Also, from \eqref{E29} we can see that  
\begin{align*}
\left | P_{mj}(z) \right |=\left | \lambda ^{m} c_{j}\left ( \frac{z}{q^{m+1}}  \right )y\left ( \frac{z}{q^{m+1}}  \right )^j\right |    \leq \left ( \frac{1}{\left | q \right |^{j+1} }  \right )^{m}\frac{1}{2\left | q \right | }\left ( \frac{2}{\left | q \right | }  \right )^j    
\end{align*}
Since \begin{align}
\sum_{m=0}^{\infty}\left ( \frac{1}{\left | q \right |^{j+1} }  \right )^m \frac{1}{2\left | q \right | }\sum_{j=2}^{\infty}    
\left ( \frac{2}{\left | q \right | }  \right )^j<\frac{1}{4\left | q \right | }  <\infty,
\end{align}
By the Weierstrass M-test, we conclude that $T[y](z)=\sum_{j=2}^{\infty}\sum_{m=0}^{\infty}P_{mj}(z)$ converges absolutely and uniformly in $D(\rho, \sigma),$ and hence $T[y](z)$ is analytic in $D(\rho, \sigma).$ Therefore, $T: X\to X.$ Next, we prove that $X$ is a contraction mapping. Let $y,h\in X.$ Combining \eqref{E23}, \eqref{E27}, \eqref{E24}, \eqref{E25}, \eqref{E26}, and \eqref{E28} yield
\begin{align*}
&\left | T[y](z)-T[h](z) \right |\\\nonumber
\leq & \sum_{j=2}^{\infty}\sum_{m=0}^{\infty}\left | \lambda^m c_j\left ( \frac{z}{q^{m+1}}  \right )\left ( y\left ( \frac{z}{q^{m+1}}  \right )^j-h\left ( \frac{z}{q^{m+1}}  \right )^j  \right )  \right |\\
\leq& \sum_{j=2}^{\infty}\sum_{m=0}^{\infty}\frac{1}{\left | q \right |^m }2^{j-1}\left | q \right |^{j-1}\frac{1}{\left | q \right |^{m+1} }\left ( \frac{1}{\left | q \right |^{m+2} }  \right )^{j-1}j\left | \left | y-h \right |  \right |\\ 
<&\sum_{j=2}^{\infty}\frac{1}{2}\left ( \frac{2}{\left | q \right | }  \right )^jj\left ( \frac{\left | q \right | }{4}  \right )^j\left | \left | y-h \right |  \right |\\     
=&\sum_{j=2}^{\infty}\frac{1}{2}\left ( \frac{1}{2}  \right )^j j\left | \left | y-h \right |  \right |\\   
=&\frac{3}{4}\left | \left | y-h \right |  \right |.  
\end{align*} 
By Lemma \ref{L2}, we conclude that $X$ has a unique fixed point under contraction mapping $T.$ That is, there exists an analytic function $y\in X$ such that $$y(z)=T[y](z)=\sum_{j=2}^{\infty}\sum_{m=0}^{\infty}\lambda ^m c_j\left ( \frac{z}{q^{m+1}}  \right )y\left ( \frac{z}{q^{m+1}}  \right )^{j} $$
is a solution to equation \eqref{E40} in $D(\rho, \sigma).$ Then we can extend the local meromorphic solution $y(z)$ to the whole complex plane $\mathbb{C}$ by Lemma~\ref{L4}.
\end{proof}

\section{proof of theorem~\ref{T4}}
\begin{proof}
Some details are the same as those in the proof of Theorem \ref{T2}, to which we refer.
We omit these details here.
Let $\rho^2+\sigma ^2\leq \left | q \right |^2  .$ Then \begin{align}\label{E37} 
\left | z \right |=\sqrt{\left | \Re(z) \right |^2 +\left | \Im(z) \right |^2 }\leq \sqrt{\rho^2+\sigma^2}  \leq \left | q \right |  .
\end{align} This means that all the coefficient functions $a_j(z),$ $b_k(z)$ have no poles in $D(\rho, \sigma).$
 Referring to \eqref{E15} and \eqref{E16}, we have \begin{align}\label{E34}
y(z)=\sum_{j=2}^{\infty}c_j\left ( \frac{z}{q}  \right )y\left ( \frac{z}{q}  \right )^j,   
\end{align}
and \begin{align}\label{E35}
\left | c_j(z) \right |&\leq \sum_{1\leq k\leq j-1}\underset{i  = 1,\dots,p}{\max}\left | a_i(z) \right |\underset{i  = 1,\dots,t}\max\left | b_i(z) \right |^{k-1}+\left | a_1(z) \right |\left | b_1(z) \right |^{j-1}\\\nonumber
&<2^{j-1}\left | q \right |^{\left | z \right |(j-1) }.     
\end{align}
Define \begin{align}\label{E36}
T\left [ y \right ](z)  = \sum_{j  = 2}^{\infty}c_{j}\left ( \frac{z}{q}  \right )y\left ( \frac{z}{q}  \right )^{j}  = \sum_{j  = 2}^{\infty}E_j(z). \end{align}
Let $X$ be the set of all functions $z\to y(z),$ analytic and bounded in $$D(\rho,\sigma )=\left \{ z:\left | \Re(z) \right |\leq \rho,\left | \Re(z) \right |\leq \sigma    \right \},$$ and let \begin{align}\label{E38} 
\left | \left | y \right |  \right |=\underset{z\in D(\rho,\sigma)}{\sup }\left | y(z) \right |\leq \frac{1}{\left | q \right | }.
\end{align}
By referring to Step $2$ of the proof of Theorem~ \ref{T1}, we see that $X$ is a Banach Space. Suppose that \eqref{E33} holds.
By combining \eqref{E33}, \eqref{E37}, \eqref{E35}, \eqref{E36}, and \eqref{E38}, we have
$$\left | E_j(z) \right |=\left | c_j\left ( \frac{z}{q}  \right )y\left ( \frac{z}{q}  \right )^j   \right |\leq \frac{1}{2\left | q \right | }\left ( \frac{2}{\left | q \right | }  \right )^{j}    .$$
Note that $$\sum_{j=2}^{\infty}\frac{1}{2\left | q \right | }\left ( \frac{2}{\left | q \right | }  \right )^j\leq \frac{1}{\left | q \right | } <\infty.$$ 
Then by the Weierstrass-M test, we have $T[y](z)$ is analytic in $D(\rho, \sigma),$ and $$\left | T\left [ y \right ](z)  \right |\leq \sum_{j=2}^{\infty}\frac{1}{2\left | q \right | }\left ( \frac{2}{\left | q \right | }  \right )^{j}\leq \frac{1}{\left | q \right | }.$$ This shows that $T$ maps $X$ to itself. Moreover,
\begin{align*}
&\left | T[y](z)-T[h](z) \right |\\
=&\left | \sum_{j=2}^{\infty}c_j\left ( \frac{z}{q}  \right )\left ( y\left ( \frac{z}{q}  \right )^{j}- h\left ( \frac{z}{q}  \right )^{j} \right )    \right |\\  
\leq &\sum_{j=2}^{\infty}\left ( \frac{2}{\left | q \right | }  \right )^{j-1}j\frac{1}{\left | q \right | }\left | \left | y-h \right |  \right | \\  
<&\frac{1}{4}\left | \left | y-h \right |  \right |.  
\end{align*}
Then $$\left | \left | T[y](z)-T[h](z) \right |  \right |\leq \frac{1}{4}\left | \left | y-h \right |  \right |   .$$
By Lemma \ref{L2}, we conclude that $X$ has a unique fixed point under contraction mapping $T.$ That is, there exists an analytic function $y\in X$ such that $$y(z)=T[y](z)=\sum_{j=2}^{\infty}c_j\left ( \frac{z}{q}  \right )y\left ( \frac{z}{q}  \right )^{j}$$   
is a solution to equation \eqref{E39} in $D(\rho, \sigma).$ Then we can extend the local meromorphic solution $y(z)$ to the whole complex plane $\mathbb{C}$ by Lemma~\ref{L4}.

\end{proof}

\section{acknowledgements}
Before submitting the manuscript, I had some valuable discussions with Professor Risto Korhonen about applying the Banach fixed point theorem to examine the existence of analytic solutions to complex difference equations. I would like to thank Professor Korhonen for his valuable guidance.




\end{document}